\documentclass[11pt]{article}
\usepackage[margin=1in]{geometry}
\usepackage{amsmath,amssymb,amsthm}
\usepackage{hyperref}
\usepackage{tikz}

\newtheorem{theorem}{Theorem}
\newtheorem{proposition}{Proposition}
\newtheorem{lemma}{Lemma}
\newtheorem{corollary}{Corollary}
\newtheorem{remark}{Remark}
\newtheorem{question}{Question}

\newcommand{\R}{\mathbb{R}}
\newcommand{\Kcal}{\mathcal{K}}
\newcommand{\Lcal}{\mathcal{L}}

\DeclareMathOperator{\Gr}{Gr}
\DeclareMathOperator{\St}{St}

\title{Discriminant Varieties for Stick Knots and Links}
\author{Alexander Kolpakov \and Igor Rivin}
\date{August 2, 2026}

\hypersetup{
  pdftitle={Discriminant Varieties for Stick Knots and Links},
  pdfauthor={Alexander Kolpakov and Igor Rivin},
  pdfsubject={Upper and lower bounds for the number of knot and link types realizable with a bounded number of sticks},
  pdfkeywords={polygonal knots, stick number, semialgebraic chambers, fewnomials, Garside normal form, Lean 4}
}

\begin{document}

\maketitle

\begin{abstract}
How many knot types can be built from a fixed budget of straight sticks?  We
prove that the answer has factorial-scale growth, settling its order for the
first time.  No previously published general upper bound improves on the
exponential-in-the-square estimate obtained from crossing-number enumeration;
we replace it with a factorial-scale upper bound, which is optimal at the level
of growth order.  The proof turns polygonal self-intersection into a sparse
real-algebraic chamber problem in only linearly many dimensions, while a
complementary braid construction supplies factorially many distinct knots.
The result creates a direct bridge between knot topology, real algebraic
geometry, fewnomial structure, and permutation combinatorics.
\end{abstract}

\section{Introduction}

Let \(p_1,\ldots,p_N\in\R^3\) be labeled vertices joined cyclically by straight
segments.  The space of embedded \(N\)-gons is the complement of a
self-intersection discriminant.  Its connected components are sometimes called
geometric knot types: two polygons in the same component are connected by an
isotopy through \(N\)-gons.  Calvo computed this space for \(N=6,7\), showing in
particular that geometric equivalence is finer than ordinary topological knot
type \cite{Calvo}.  The exact number of components is unknown in general.

The semialgebraic starting point is also already present in Calvo's work:
for every pair of non--incident edges, the possible-intersection locus is
described there as a codimension-one cubic semialgebraic piece, with additional
inequalities selecting intersections of the segments rather than merely their
supporting lines.  Thus neither the polygon-space discriminant nor its
semialgebraicity is claimed as new here.  Our upper bound is a quantitative
extraction from that model: we divide out affine freedom and retain the
binomial dependence on the number of walls in a refined sign-condition
component estimate.

The purpose of this note is to isolate what can be proved by elementary
semialgebraic methods.  The basic facts are:
\begin{itemize}
    \item the discriminant is semialgebraic and has explicit low-degree chart
    equations;
    \item knot type is constant on each chamber of the complement;
    \item every knot type with stick number at most \(N\) occurs in at least one
    chamber of positive measure; and
    \item determinant-related sign-condition bounds, with their binomial
    factor retained after affine normalization, give
    \[
        C_N\le 2(A N)^{3N-12}=N^{3N+o(N)};
    \]
    \item combined with the factorial lower bound implicit in earlier
    arc-index work, the chamber estimate identifies the logarithmic growth
    order
    \(\log|\Kcal_N|=\Theta(N\log N)\), equivalently the scale
    \(N^{\Theta(N)}\);
    \item a further extraction from the Malyutin--Stupakov pure-braid
    construction, using Garside normal forms and doubly alternating
    permutations, sharpens the lower side to
    \(|\Kcal_N|\ge N^{(2/3+o(1))N}\), already among prime satellite knots;
    and
    \item after summing over component decompositions, there are at most
    \(N^{4N+o(N)}\) ordinary links with at most \(N\) total sticks.
\end{itemize}
The determinant form of the crossing walls is the reason that one expects a
substantially structured chamber-count problem rather than an arbitrary
arrangement of high-degree hypersurfaces.

A bound of some kind was already implicit: a generic projection of an
\(N\)-stick polygon has at most \(N(N-3)/2\) crossings, and classical
exponential enumeration by crossing number gives
\(|\Kcal_N|\le\exp(O(N^2))\) \cite{FreedmanHeWang}.  Our contribution is the
much smaller direct stick-space estimate obtained by making Calvo's
semialgebraic discriminant model quantitative:
\[
 |\Kcal_N|\le N^{(3+o(1))N},
\]
which, combined with the factorial arc-index lower bound recalled below,
identifies the order of \(\log|\Kcal_N|\).  The recent strong-geometry reconstruction
theorem of Gros--Ramirez Alfonsin shows that enriched chirotope data determine
the isotopy type of a generic polygonal knot \cite{GrosRamirez}; it does not
give the chamber-count estimate proved here.  The present result is therefore
a quantitative synthesis of established geometric and sign-condition inputs,
not a new discriminant encoding or a new general real-algebraic theorem.
Randell--Simon--Tokle directly studied the counting
function by polygon length and proved an exponential lower bound, but no
global upper bound of this scale \cite{RandellSimonTokle}.  Millett later
listed the corresponding question for equilateral polygons as an open problem
and reported that no useful estimate by edge number was then available
\cite[Problem 5.2]{MillettPhysical}.  After also checking the strong-geometry
reconstruction literature and recent computational work on individual stick
numbers \cite{CantarellaEtAl2025}, we have not located a prior published
statement of either an \(N^{O(N)}\) upper bound for \(|\Kcal_N|\) or the
resulting order \(\log|\Kcal_N|=\Theta(N\log N)\).

\section{Normalized stick-knot parameter space}

Let
\[
    H=\mathbf{1}^{\perp}\subset\R^N.
\]
After subtracting the centroid of the vertices, the three coordinate rows of a
full-dimensional polygon lie in \(H\).  Applying an invertible linear change of
coordinates in \(\R^3\), one may normalize the row Gram matrix to the identity.
This leads to the Stiefel model
\[
    X_N=\St_3(H)
    =
    \{Q\in\R^{3\times N}: Q\mathbf{1}=0,\; QQ^T=I_3\}.
\]
The columns \(q_i=Qe_i\in\R^3\) are the normalized vertices.  Since
\(\dim H=N-1\),
\[
    \dim X_N
    =
    3(N-1)-\frac{3\cdot4}{2}
    =
    3N-9.
\]
Quotienting by \(SO(3)\) gives the oriented Grassmannian
\[
    \widetilde{\Gr}_3(H)=X_N/SO(3),
    \qquad
    \dim \widetilde{\Gr}_3(H)=3N-12.
\]
This does not change chamber counts: the quotient is a locally trivial bundle
with connected fiber \(SO(3)\), and rotations move a polygon inside the same
embedding chamber.  The three-dimensional reduction is useful below.
Quotienting by all of \(O(3)\), on the other hand, identifies mirror images.

\section{Local equations for crossing walls}

Consider two non--incident directed edges \(ab\) and \(cd\), and write
\[
    v=p_b-p_a,\qquad w=p_d-p_c,\qquad r=p_c-p_a.
\]
In the \(xy\)-projection chart, set
\[
    D=\det\begin{pmatrix}
        x_b-x_a & x_d-x_c\\
        y_b-y_a & y_d-y_c
    \end{pmatrix},
\]
\[
    N_\lambda=\det\begin{pmatrix}
        x_c-x_a & x_d-x_c\\
        y_c-y_a & y_d-y_c
    \end{pmatrix},\qquad
    N_\mu=\det\begin{pmatrix}
        x_c-x_a & x_b-x_a\\
        y_c-y_a & y_b-y_a
    \end{pmatrix}.
\]
When \(D\ne0\), the two projected supporting lines meet at parameters
\[
    \lambda=\frac{N_\lambda}{D},\qquad
    \mu=\frac{N_\mu}{D}.
\]
The projected crossing lies in the interiors of both segments precisely when
\[
    N_\lambda(D-N_\lambda)>0,\qquad
    N_\mu(D-N_\mu)>0.
\]
The height difference at the crossing is
\[
    h=z_a+\lambda(z_b-z_a)-z_c-\mu(z_d-z_c).
\]
Clearing denominators gives
\[
    F_{ab,cd}
    =
    D(z_a-z_c)
    +N_\lambda(z_b-z_a)
    -N_\mu(z_d-z_c).
\]
Equivalently, up to sign,
\[
    F_{ab,cd}
    =
    \det(p_b-p_a,\;p_d-p_c,\;p_c-p_a).
\]
This is the affine orientation determinant of the four vertices
\(p_a,p_b,p_c,p_d\), up to sign.  Thus each over/under wall is not merely a
low-degree equation: it is one of the determinant walls governing the order
type of the vertex configuration.  As a polynomial in the coordinate variables,
it has at most \(24\) monomial terms before cancellation.  The same construction
is used in the \(xz\)- and \(yz\)-projection charts.  The cases where the
relevant \(2\times2\) projected determinant vanishes are lower-dimensional
chart degeneracies.  They may be included in a full chartwise sign
description, but are not needed for the enlarged coordinate-free wall family
used in Theorem~\ref{thm:upper}.

\section{The discriminant and its chambers}

Let \(\Delta_N\subset X_N\) be the set for which the cyclic piecewise-linear
map is not an embedding.  Thus an edge may collapse, adjacent edges may meet
away from their common endpoint, or two combinatorially non--incident edges may
meet.  The last condition says that, for some such pair \(ab,cd\), there are
\(\lambda,\mu\in[0,1]\) with
\[
    p_a+\lambda(p_b-p_a)=p_c+\mu(p_d-p_c).
\]
Edge collapse and adjacent-edge overlap have similarly elementary polynomial
descriptions.  These first-order formulas use only polynomial equalities and
inequalities with existentially quantified segment parameters, so the
Tarski--Seidenberg theorem \cite[Section 2.2]{BochnakCosteRoy} implies that
\(\Delta_N\) is semialgebraic.  This formulation includes the adjacent
backtracking degeneracy that is missed if one mentions only non--incident edge
pairs.

Define
\[
    C_N=b_0(X_N\setminus\Delta_N),
\]
the number of embedding chambers.  For a knot type \(K\), let \(C_N(K)\) be
the number of chambers in which the polygon has type \(K\).

\begin{proposition}[Chambers carry one knot type]\label{prop:chambers}
The ambient isotopy class of the cyclic polygon is constant on each connected
component of \(X_N\setminus\Delta_N\).
\end{proposition}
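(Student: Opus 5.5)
The plan is to reduce the statement to the standard fact that a continuous path of embedded polygons gives an ambient isotopy. Since \(X_N\setminus\Delta_N\) is an open semialgebraic subset of the smooth manifold \(X_N\), it is locally path-connected. Its connected components therefore coincide with its path components. Take two points \(Q_0,Q_1\) in the same component and join them by a continuous path \(Q_t\), \(t\in[0,1]\), inside the complement. By definition of \(\Delta_N\), each \(Q_t\) defines an embedded closed polygon \(P_t\subset\R^3\) with vertices \(q_i(t)\) depending continuously on \(t\).

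The core step is to show that nearby embedded polygons are ambient isotopic. By compactness of \([0,1]\) it then suffices to prove a local statement: every embedded polygon \(P\) has a neighborhood in \(X_N\setminus\Delta_N\) on which the knot type is constant.

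For this I would first use the fact that the distance between non-incident edges, and the separation of adjacent edges away from their common vertex, are positive and vary continuously. So there is \(\varepsilon>0\) such that, for all \(Q'\) within \(\varepsilon\) of \(Q\), the straight-line homotopy \(P_s\) moving each vertex linearly from \(q_i\) to \(q_i'\) consists of embedded polygons. In fact the segment from \(Q\) to \(Q'\) in \(\R^{3\times N}\) may leave \(X_N\), so I would work in the ambient space of vertex configurations, where embeddedness is an open condition. Along this linear homotopy, I would build an explicit ambient isotopy by moving one vertex at a time. Moving a single vertex \(p_i\) to \(p_i'\) sweeps the two incident edges across the triangles \(p_{i-1}p_ip_i'\) and \(p_ip_i'p_{i+1}\). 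If these triangles meet the rest of the polygon only along the shared edges, this is a Reidemeister \(\Delta\)-move, a standard elementary isotopy realized by an ambient isotopy supported near the triangles. For \(\varepsilon\) small compared with the minimal separation, these triangles are thin and avoid all other edges, so the moves are legitimate.

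The main obstacle is making the \(\varepsilon\) uniform and checking that the intermediate polygons in the vertex-by-vertex process stay embedded. I would handle this with the Lebesgue number lemma on the compact path \(\{Q_t\}\): the minimal separation is a continuous positive function along the path, hence bounded below. One then subdivides \([0,1]\) finely enough that consecutive polygons differ by less than a third of that bound in every vertex, which guarantees that all the \(\Delta\)-triangles avoid non-adjacent edges. Concatenating the finitely many \(\Delta\)-moves gives an ambient isotopy from \(P_0\) to \(P_1\). An alternative is to invoke isotopy extension for the continuous family of tame (PL) embeddings directly, but the \(\Delta\)-move argument keeps everything elementary.
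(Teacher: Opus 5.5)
Your argument matches the paper's through the subdivision step. You get a path inside the chamber from local path-connectedness of an open subset of the manifold \(X_N\), where the paper uses semialgebraic path-connectedness; either works. Compactness then gives a uniform step size, and straight-line interpolation of vertices in the unnormalized configuration space stays embedded.

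The routes differ only at the end. The paper finishes in one line by applying the Edwards--Kirby isotopy extension theorem to this isotopy through embedded polygons, which is your stated alternative. You instead build the ambient isotopy by hand from single-vertex slides, i.e.\ Reidemeister's elementary \(\Delta\)-moves. The paper's route is short but rests on a substantial theorem. Yours is classical and self-contained, but the real work is in bookkeeping your sketch leaves loose, at two points.

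First, the one-third-of-the-separation estimate only keeps the triangles \(p_{i-1}p_ip_i'\) and \(p_ip_i'p_{i+1}\) away from non-incident edges. These triangles touch \(p_{i-2}p_{i-1}\) and \(p_{i+1}p_{i+2}\) at \(p_{i\pm1}\), so a distance bound says nothing there. You need uniform lower bounds on edge lengths and on the angles between consecutive edges (also available by compactness), with the step size small relative to them.

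Second, a vertex slide is a \(\Delta\)-move followed by an inverse \(\Delta\)-move, and the intermediate polygon \(\ldots p_{i-1}p_i'p_ip_{i+1}\ldots\) need not be embedded. If \(p_i'\) lies in the plane of \(p_{i-1},p_i,p_{i+1}\) and the edge \(p_ip_{i+1}\) points into the triangle \(p_{i-1}p_ip_i'\), then \(p_{i-1}p_i'\) meets \(p_ip_{i+1}\). You can fix this in any of three ways:
\begin{itemize}
\item perform the two moves in the appropriate order;
\item pass through an auxiliary generic point;
\item show directly that a linear vertex slide whose swept triangles meet the rest of the polygon only at \(p_{i\pm1}\) is induced by an ambient isotopy supported near those triangles.
\end{itemize}
With these details supplied, your construction is a fully elementary replacement for the appeal to isotopy extension.
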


\begin{proof}
Connected semialgebraic sets are semialgebraically path-connected, so two
points of a chamber can be joined by a path in
\(X_N\setminus\Delta_N\).  Such a path is a one-parameter family of embedded
polygonal curves with fixed cyclic combinatorics.  For a sufficiently fine
subdivision of the parameter interval, consecutive polygons are close enough
that the straight-line interpolation of corresponding vertices remains
embedded.  This gives an isotopy through embedded polygonal curves, and the
ambient isotopy extension theorem \cite{EdwardsKirby} gives ambient isotopy of
the endpoints.
\end{proof}

\begin{remark}[Generic walls]
At a smooth codimension-one point of a single wall \(F_{ab,cd}=0\), with
\(D\ne0\), \(0<\lambda,\mu<1\), and no simultaneous degeneracy, crossing the
wall changes the sign of the height difference \(h=F_{ab,cd}/D\).  Thus the
two adjacent diagrams differ by one over/under switch.  This is a local
statement; it does not imply that all crossing assignments in a large diagram
are independently realizable.
\end{remark}

\section{Support of the knot type map}

Let \(\operatorname{stick}(K)\) denote the stick number of a knot type \(K\).
This is the minimum number of straight segments needed in a polygonal
representative of \(K\); see, for example, the standard tables and references in
KnotInfo \cite{KnotInfoStick} and the stick-number work of Adams et al.
\cite{AdamsStick}.

\begin{theorem}[Support theorem]\label{thm:support}
For \(N\ge4\), the image of the knot type map
\[
    \tau_N:X_N\setminus\Delta_N\longrightarrow
    \{\text{ambient isotopy classes of knots}\}
\]
is exactly
\[
    \Kcal_N=\{K:\operatorname{stick}(K)\le N\}.
\]
Moreover, if \(K\in\Kcal_N\), then \(\tau_N^{-1}(K)\) contains a nonempty open
subset of \(X_N\).  In particular it has positive measure for the natural
\(O(N-1)\)-invariant Riemannian volume on \(X_N\), and
\[
    C_N(K)\ge1
    \quad\Longleftrightarrow\quad
    \operatorname{stick}(K)\le N.
\]
\end{theorem}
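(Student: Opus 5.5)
The plan is to prove the two inclusions \(\tau_N(X_N\setminus\Delta_N)\subseteq\Kcal_N\) and \(\Kcal_N\subseteq\tau_N(X_N\setminus\Delta_N)\) separately, and to get the openness statement from the fact that \(\Delta_N\) is closed. The first inclusion is immediate. A point \(Q\in X_N\setminus\Delta_N\) is by definition an embedded closed polygon with \(N\) straight edges, so its knot type \(K\) satisfies \(\operatorname{stick}(K)\le N\).

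For the reverse inclusion, fix \(K\) with \(m=\operatorname{stick}(K)\le N\) and choose an embedded \(m\)-stick representative \(P\). I will insert \(N-m\) extra vertices in the interiors of edges of \(P\), for example at midpoints. This gives a cyclic \(N\)-gon \(P'\) with the same image curve, so it is still an embedding of type \(K\). Some consecutive edges of \(P'\) are now collinear. That is harmless for the definition of \(\Delta_N\), since the piecewise-linear map is still injective. However, \(P'\) may fail to be full-dimensional; this happens when \(K\) is the unknot and \(P\) is a triangle.

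The next step uses that the non-embedding locus is closed in \((\R^3)^N\). It is the projection along the compact parameter space \([0,1]^2\) of a closed incidence set, together with the closed edge-collapse conditions. Hence a small generic perturbation of the vertices of \(P'\) keeps the polygon embedded. The straight-line homotopy from \(P'\) to the perturbed polygon stays in the embedded locus, so by the argument of Proposition~\ref{prop:chambers} the knot type is unchanged. Genericity lets me assume the affine span of the vertices is all of \(\R^3\).

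Next I normalize the perturbed polygon. I subtract the centroid, giving a \(3\times N\) matrix \(P''\) of rank \(3\) with rows in \(H\). I then apply \(A=(P''P''^T)^{-1/2}\), obtaining \(Q=AP''\in X_N\). The key point is that \(A\) is positive definite, so \(\det A>0\) and \(A\) lies in the identity component of \(GL_3(\R)\). It can therefore be joined to \(I\) by a path of invertible linear maps, and this path gives an isotopy through embedded polygons. Translation is likewise isotopic to the identity. Hence \(Q\notin\Delta_N\) and \(\tau_N(Q)=K\). This is the step where one must be careful not to pass through \(O(3)\) or an orientation-reversing normalization, since that would replace \(K\) by its mirror image.

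For the openness claim I restrict to \(X_N\). The set \(\Delta_N\) is closed in \(X_N\), by the same compactness-of-parameters argument, so \(X_N\setminus\Delta_N\) is open. The connected component of \(X_N\setminus\Delta_N\) containing \(Q\) is therefore a nonempty open subset of \(X_N\). By Proposition~\ref{prop:chambers}, \(\tau_N\) is constant on this component, so it lies in \(\tau_N^{-1}(K)\). A nonempty open subset of the smooth compact manifold \(X_N\) has positive measure for the \(O(N-1)\)-invariant Riemannian volume, which also gives \(C_N(K)\ge1\). The converse implication \(C_N(K)\ge1\Rightarrow\operatorname{stick}(K)\le N\) is just the first inclusion.

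I expect the only genuinely delicate points to be two. The first is verifying that \(\Delta_N\) is closed, including the adjacent-backtracking and edge-collapse conditions. The second is the orientation issue in the Gram normalization. Both should be routine once stated carefully.
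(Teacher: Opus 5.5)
Your proposal is correct and follows essentially the same route as the paper. You subdivide, perturb to a full-dimensional embedded polygon, normalize by the positive definite (hence orientation-preserving) map $(MM^T)^{-1/2}$, and get openness from stability of embeddings; the only cosmetic difference is that you take the open set to be the chamber containing $Q$, via closedness of $\Delta_N$ and Proposition~\ref{prop:chambers}, whereas the paper cites PL stability under small vertex perturbations directly.
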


\begin{proof}
If \(Q\in X_N\setminus\Delta_N\), then its columns form an embedded polygon
with \(N\) sticks.  Hence the represented knot type \(K=\tau_N(Q)\) satisfies
\(\operatorname{stick}(K)\le N\).

Conversely, suppose \(\operatorname{stick}(K)\le N\).  Choose a polygonal
representative of \(K\) with at most \(N\) sticks and subdivide edges, if
necessary, to obtain exactly \(N\) labeled vertices.  After an arbitrarily small
ambient isotopy, assume that the centered coordinate rows span a
three-dimensional subspace of \(H\).  Let \(M\in\R^{3\times N}\) be the matrix
of centered vertex coordinates.  Then \(M\mathbf 1=0\) and \(MM^T\) is positive
definite.  Put
\[
    A=(MM^T)^{1/2},\qquad Q=A^{-1}M.
\]
Then \(Q\mathbf 1=0\) and \(QQ^T=I_3\), so \(Q\in X_N\).  The original centered
polygon is the image of the \(Q\)-polygon under the orientation-preserving
linear map \(A\).  Therefore \(Q\) represents the same knot type \(K\), and
\(Q\notin\Delta_N\).

Finally, stability of PL embeddings under sufficiently small vertex
perturbations \cite{EdwardsKirby} gives a neighborhood of \(Q\) in \(X_N\)
representing the same knot type.  Concretely, non--incident edges have
positive separation, every edge is nonzero, and local injectivity at the
finitely many adjacent pairs persists; at a straight subdivision vertex the
two outgoing directions have strictly negative dot product.  Nonempty open
subsets of the compact smooth manifold \(X_N\) have positive natural
\(O(N-1)\)-invariant Riemannian volume.
\end{proof}

\section{A determinantal semialgebraic upper bound}

\begin{theorem}[Chamber upper bound]\label{thm:upper}
For \(N\ge4\), there is an absolute constant \(A>0\) such that
\[
    C_N\le 2(A N)^{3N-12}=N^{3N+o(N)}.
\]
For the complete-chord model, in which every pair of labeled vertices is
allowed as a chord, the analogous bound is
\[
    C_N^{\mathrm{all}}\le 2(A N^3)^{3N-12}=N^{9N+o(N)}.
\]
\end{theorem}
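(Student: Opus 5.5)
The plan is to bound \(C_N\) by the number of realizable sign conditions of a family of low-degree polynomials on a space of dimension \(3N-12\). The count should use a Milnor--Thom/Basu--Pollack--Roy bound that keeps the binomial dependence on the number of polynomials: for \(s\) polynomials of degree at most \(d\) on a \(k\)-dimensional variety, the number of connected components of all realizable sign conditions is at most \(\sum_{j\le k}\binom{s}{j}4^j d(2d-1)^{k-1}\), hence at most \((O(sd/k))^k\) once \(s\ge k\).

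\textbf{Step 1: reduce the dimension and the variables.} First pass to \(\widetilde{\Gr}_3(H)\), which has dimension \(k=3N-12\); as observed in Section 2, this preserves chamber counts. Rather than work on the Grassmannian directly, I would fix an affine normalization. On a dense open set, after relabeling, vertices \(p_1,\dots,p_4\) can be sent by an affine map to a standard simplex. The remaining \(3(N-4)=3N-12\) coordinates are then free real variables, and every orientation determinant becomes a polynomial of degree at most \(3\) in them. Orientation-preserving affine maps do not change embeddedness or knot type. Mirror-reversing normalizations are handled by a second copy, which is the source of the factor \(2\). Points where \(p_1,\dots,p_4\) are coplanar are handled by perturbation: every chamber is open, so it meets the locus where they are affinely independent.

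\textbf{Step 2: enlarge the wall family.} Take the coordinate-free family \(\mathcal F\) of all affine orientation determinants \(\det(p_j-p_i,p_k-p_i,p_l-p_i)\) for quadruples of vertices that are relevant to edge pairs. These are the \(O(N^2)\) quadruples \(\{a,b,c,d\}\) with \(ab\), \(cd\) edges, plus the triples, padded by a neighbouring vertex, that detect collinearity of adjacent edges. This gives \(s=O(N^2)\) polynomials of degree at most \(3\).

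The key claim is that the sign vector of \(\mathcal F\) determines embeddedness. Two non-incident segments \(ab\) and \(cd\) intersect only if \(p_a,p_b,p_c,p_d\) are coplanar, that is, only if \(F_{ab,cd}=0\). Degenerate adjacency forces collinearity of the three vertices, which also makes the corresponding determinants vanish. So \(\Delta_N\) is contained in \(Z=\bigcup_{F\in\mathcal F}\{F=0\}\). Each chamber of \(X_N\setminus\Delta_N\) is a union of connected components of the complement of \(Z\), together with pieces of \(Z\setminus\Delta_N\), and it contains at least one full-dimensional component of the complement of \(Z\). Therefore \(C_N\) is at most the number of connected components of the complement of \(Z\). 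That number is bounded by the sign-condition count with \(s=O(N^2)\), \(d=3\) and \(k=3N-12\), giving \((O(N^2)/N)^{k}=(AN)^{3N-12}\).

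\textbf{Step 3: the complete-chord variant.} Here all pairs of vertices are chords, so quadruples of vertices range over all \(O(N^4)\) choices. This gives \(s=O(N^4)\) and hence \((AN^3)^{3N-12}\). The asymptotic forms \(N^{3N+o(N)}\) and \(N^{9N+o(N)}\) follow immediately.

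The main obstacle is Step 2's claim that every chamber contains a full-dimensional cell of the complement of \(Z\), and that the normalization chart really captures every chamber. I would handle this by openness and density: each chamber is open in a connected manifold of dimension \(k\), and \(Z\) is a proper algebraic subset of lower dimension. A second point is that the binomial-form bound must be invoked in its \(\binom{s}{k}\) form, rather than the cruder \((sd)^k\); otherwise one would only get \(N^{6N}\).
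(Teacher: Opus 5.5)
Your proposal is correct and follows essentially the same route as the paper. You normalize to the two orientation slices \(S_\pm\) of dimension \(3N-12\), and use an enlarged family of \(N(N-1)/2\) bounded-degree walls whose complement has empty interior and consists of embedded polygons, so that each chamber contains a full component of the wall complement; you then apply a sign-condition bound that keeps the \(\binom{s}{j}\) dependence. The differences are cosmetic: you detect adjacent-edge degeneracy with orientation determinants padded by a fourth vertex (so every wall is a degree-\(3\) chirotope minor) where the paper uses \(\|e_i\times e_{i+1}\|^2\), you cite the Basu--Pollack--Roy sum instead of Barone--Basu, and the case \(N=4\), where each slice is a single point, should be checked directly rather than read off the sign-condition formula.
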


\begin{proof}
Write \(e_i=p_{i+1}-p_i\), with indices read cyclically.  There are
\[
    r_N=\frac{N(N-3)}{2}
\]
unordered pairs of non--incident edges.  For each such pair put
\[
    f_{ij}=\det(e_i,e_j,p_j-p_i),
\]
and, for each adjacent pair, put
\[
    h_i=\|e_i\times e_{i+1}\|^2.
\]
The \(f_{ij}\) have degree three and the \(h_i\) have degree four.  If none
vanishes, consecutive edges are nonzero and noncollinear, hence meet only at
their prescribed common endpoint.  Two non--incident segments cannot meet
either: an intersection would make their two directions and displacement
coplanar and force the corresponding \(f_{ij}\) to vanish.  Thus the complement
of these
\[
    s=r_N+N=\frac{N(N-1)}2
\]
algebraic walls consists entirely of embedded polygons.  It is harmless that
these enlarged walls also delete benign coplanar configurations and straight
subdivisions.

We next remove the affine degrees of freedom before counting.  For
\(\varepsilon\in\{+1,-1\}\), let \(S_\varepsilon\) be the affine slice
\[
 p_1=0,\qquad p_2=(1,0,0),\qquad p_3=(0,1,0),\qquad
 p_4=(0,0,\varepsilon).
\]
It is an affine space of dimension
\[
    m=3(N-4)=3N-12.
\]
Every embedding chamber in \(X_N\) contains a polygon whose first four
vertices are affinely independent.  Indeed, perturb the vertices arbitrarily
slightly in the centered full-row-rank coordinate space so that the first four
are affinely independent; embedding and full rank persist, and subsequent
row-normalization returns a nearby point of the same chamber.  According to the sign of that
determinant, a unique orientation-preserving affine map carries the four
vertices to the fixed quartet in \(S_+\) or \(S_-\).  The group of
orientation-preserving affine maps is connected.  Following such a path and
continuously centering and row-normalizing the polygon stays in the original
embedding chamber.  Conversely, centering and row-normalizing gives a
continuous map from the embedded locus \(E_\varepsilon\subset S_\varepsilon\)
to \(X_N\setminus\Delta_N\).  Consequently the components of the two slice
loci map onto the embedding chambers, and
\[
    C_N\le b_0(E_+)+b_0(E_-).
\]

Restrict the \(s\) wall polynomials to either slice, discard nonzero constants,
and let \(G_\varepsilon\) be the complement of their zero sets.  Every remaining
restriction is a proper polynomial: for a determinant wall one may vary a
free endpoint off the plane of the other three, and for an adjacent wall off
the relevant line; walls involving only the fixed tetrahedron are nonzero
constants.  Hence their finite union has empty interior in the slice.  Every
open component of \(E_\varepsilon\) therefore meets
\(G_\varepsilon\), while \(G_\varepsilon\subset E_\varepsilon\).  The induced
map on components is surjective, so
\[
    b_0(E_\varepsilon)\le b_0(G_\varepsilon).
\]

On \(G_\varepsilon\) the sign of every wall is locally constant, so
\(b_0(G_\varepsilon)\) is the sum of the component counts of its realizable
strict sign conditions.  Dimension-sensitive cell bounds retaining the
essential \((s/m)^m\) dependence go back to Pollack--Roy
\cite{PollackRoy}.  We use the part of the sign-condition theorem that is lost in the cruder
\(s^m\) notation.  The explicit Barone--Basu bound
\cite[Theorem 1.1]{BaroneBasu}, specialized to \(s\) polynomials of degree at
most four on \(\R^m\), implies
\[
 b_0(G_\varepsilon)
 \le B^m\sum_{j=0}^{m}\binom{s+1}{j}
\]
for an absolute constant \(B\).  Explicitly, take
\(\mathcal Q=\varnothing\), \(V=\R^m\), \(k'=k=m\), and \(d=4\) in their
formula (with any fixed vacuous \(d_0\), say \(2\)).  Its \(j\)-th auxiliary
factor is bounded using
\(\binom{m+1}{j+1}\le2^{m+1}\), while \(4^j\) and all fixed-degree factors
are absorbed into \(B^m\).  For \(N\ge5\) one has
\(m\le(s+1)/2\), and hence
\[
 \sum_{j=0}^{m}\binom{s+1}{j}
 \le (m+1)\binom{s+1}{m}
 \le (m+1)\left(\frac{e(s+1)}m\right)^m.
\]
Since \((s+1)/m=O(N)\), enlarging the absolute constant absorbs the factor
\(m+1\) and gives \(b_0(G_\varepsilon)\le(AN)^m\).  The case \(N=4\) has two
orientation components and is covered by the prefactor \(2\).  This proves
the first estimate.

For the complete straight-line graph on the \(N\) vertices there are
\(3\binom N4\) unordered pairs of vertex-disjoint chords.  In addition one
must exclude overlap or backtracking for the
\(N\binom{N-1}{2}=O(N^3)\) pairs of distinct chords incident at a common
vertex, using the analogous squared-cross-product walls.  Thus the enlarged
wall family still has \(s=O(N^4)\), so \(s/m=O(N^3)\), and the identical
binomial-sum argument gives \(2(AN^3)^{3N-12}\).
\end{proof}

\begin{corollary}[Knot types with bounded stick number]\label{cor:knot-types}
Let \(\Kcal_N=\{K:\operatorname{stick}(K)\le N\}\).  Then
\[
    |\Kcal_N|
    \le
    \sum_{K\in\Kcal_N} C_N(K)
    =
    C_N
    \le
    2(A N)^{3N-12}.
\]
\end{corollary}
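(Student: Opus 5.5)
The corollary is a bookkeeping consequence of three earlier results, and I would prove it by establishing the three displayed relations from left to right.

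\textbf{The equality.} First I would show that \(C_N=\sum_{K\in\Kcal_N}C_N(K)\). Proposition~\ref{prop:chambers} says the knot type map \(\tau_N\) is constant on each connected component of \(X_N\setminus\Delta_N\). So every chamber carries exactly one ambient isotopy class. Grouping the \(C_N\) chambers by that class partitions them, and the total count equals \(\sum_K C_N(K)\) summed over all knot types \(K\). By the first half of Theorem~\ref{thm:support}, every chamber's type \(K\) satisfies \(\operatorname{stick}(K)\le N\). Hence \(C_N(K)=0\) for \(K\notin\Kcal_N\), and the sum may be restricted to \(\Kcal_N\). Because \(C_N\) is finite, by Theorem~\ref{thm:upper} or simply by semialgebraicity of \(X_N\setminus\Delta_N\), only finitely many terms are nonzero and the sum is well defined.

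\textbf{The left inequality.} Next I would show that \(|\Kcal_N|\le\sum_{K\in\Kcal_N}C_N(K)\). The equivalence in Theorem~\ref{thm:support} gives \(C_N(K)\ge1\) for each \(K\in\Kcal_N\). Summing these inequalities over \(\Kcal_N\) gives the bound. It also shows that \(\Kcal_N\) is finite, since \(|\Kcal_N|\le C_N<\infty\).

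\textbf{The right inequality.} Finally, \(C_N\le 2(AN)^{3N-12}\) is exactly the first estimate of Theorem~\ref{thm:upper}, valid for \(N\ge4\).

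There is no real obstacle here. The only point needing care is that the equality counts each chamber once, which requires chambers to be disjoint and to have well-defined types; this is exactly what Proposition~\ref{prop:chambers} supplies. The convention that mirror images are distinct types is consistent with working on \(X_N\) rather than quotienting by \(O(3)\), so it does not affect the count.
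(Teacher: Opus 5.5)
Your proposal is correct and matches the paper's proof: Theorem~\ref{thm:support} gives \(C_N(K)\ge1\) for every \(K\in\Kcal_N\), Proposition~\ref{prop:chambers} gives each chamber a single knot type, and Theorem~\ref{thm:upper} supplies the final bound. You are slightly more explicit than the paper about the middle equality, which uses the inclusion \(\operatorname{im}\tau_N\subseteq\Kcal_N\) from the first half of Theorem~\ref{thm:support}; the paper leaves this step implicit.
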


\begin{proof}
By Theorem~\ref{thm:support}, every \(K\in\Kcal_N\) is represented by at least
one chamber.  By Proposition~\ref{prop:chambers}, each chamber represents only
one topological knot type.  Therefore the chamber count bounds the number of
represented knot types from above.  The final inequality is Theorem~\ref{thm:upper}.
\end{proof}

\begin{theorem}[Asymptotic growth order]\label{thm:asymptotic-order}
As \(N\to\infty\),
\[
    \log|\Kcal_N|=\Theta(N\log N),
\]
or equivalently \(|\Kcal_N|=N^{\Theta(N)}\).
\end{theorem}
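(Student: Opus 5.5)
The upper half of the estimate is already in hand.  By Corollary~\ref{cor:knot-types} we have \(|\Kcal_N|\le 2(AN)^{3N-12}\).  Taking logarithms gives
\[
\log|\Kcal_N|\le \log 2+(3N-12)\bigl(\log A+\log N\bigr)=O(N\log N).
\]
So the whole content of the theorem is a matching lower bound \(\log|\Kcal_N|\ge cN\log N\) for some \(c>0\) and all large \(N\).

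For the lower bound I will use the factorial arc-index construction mentioned in the introduction.  The plan is to exhibit, for each \(n\), at least \(n^{cn}\) pairwise distinct knot types whose stick number is at most \(Cn\), for absolute constants \(c,C\).  The concrete route goes through grid diagrams and arc presentations.

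\begin{itemize}
\item A knot of arc index \(\alpha\) has a grid diagram of size \(\alpha\).  Such a diagram is determined by two permutations of \(\{1,\dots,\alpha\}\).
\item Any grid diagram of size \(\alpha\) can be realized by a polygon with \(O(\alpha)\) sticks.  The standard realization has \(2\alpha\) sticks: one horizontal segment per row and one vertical segment per column.  The verticals are lifted to height \(1\) and the horizontals kept at height \(0\), and the short connecting steps at the corners are absorbed into slanted sticks.  This gives \(\operatorname{stick}(K)\le 2\alpha(K)\), up to an additive constant.
\item It remains to show that the number of distinct knot types of arc index at most \(\alpha\) is at least \(\alpha^{c\alpha}\).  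This is the factorial count implicit in the arc-index literature.  I would realize it by a family of grid diagrams indexed by permutations \(\sigma\) of roughly \(\alpha/k\) blocks, for a fixed \(k\), chosen so that the resulting knots are pairwise inequivalent.
\end{itemize}

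The last step is the main obstacle, because many grid diagrams represent the same knot, and we must certify distinctness for factorially many of them.  My first attempt would be a pure-braid or satellite family in the spirit of the Malyutin--Stupakov construction the paper cites.  Encode a permutation \(\sigma\in S_m\) as a pure braid word of length \(O(m)\) in Artin generators.  Close it up, together with a fixed auxiliary strand, into a knot whose grid size is \(O(m)\).  Then distinguish the resulting knots by an invariant that recovers \(\sigma\) up to a bounded-size symmetry group: the pairwise linking data of the pure braid components, or its Garside normal form via the JSJ/satellite structure.  This yields at least \(m!/O(1)^m=m^{(1-o(1))m}\) distinct knots, each with stick number \(O(m)\).

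Given that, set \(N=\lfloor Cm\rfloor\), so that \(m\asymp N\).  Then
\[
\log|\Kcal_N|\ge \log\bigl(m!\bigr)-O(m)=\Omega(N\log N),
\]
where we use monotonicity \(\Kcal_N\subset\Kcal_{N+1}\) to handle all \(N\), not only multiples of \(C\).  Combining this with the upper bound gives \(\log|\Kcal_N|=\Theta(N\log N)\).
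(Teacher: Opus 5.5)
Your upper half is the paper's argument. Your lower-bound strategy is also the paper's: a factorial family of knots whose arc index is linear in the budget, converted to sticks by a linear stick/arc-index inequality. The gap is the factorial family itself. You neither cite it precisely nor prove it, and the mechanism you sketch cannot produce it.

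A pure braid word of Artin length \(O(m)\) contributes only \(O(m)\) crossings. So unless the closing-up step adds superlinearly many crossings, your knots have crossing number \(O(m)\). There are only \(e^{O(m)}\) knot types of crossing number \(O(m)\); this is the exponential crossing-number enumeration quoted in the introduction. Hence such a family cannot contain \(m!/O(1)^m\) distinct types, whatever invariant is used to separate them.

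A factorial count at linear arc index has to come from knots whose crossing number is superlinear in the grid size. A positive permutation braid \(A_\sigma\) on \(n\) strands can have about \(n^2/2\) crossings yet fits in a grid of size \(O(n)\). This is why Malyutin--Stupakov measure a pure braid by its number \(s\) of permutation-braid factors rather than its Artin length, with arc index at most \(n\max\{11,s+8\}-4\). For instance, the pure braids \(A_\sigma A_{\sigma^{-1}}\), with \(\sigma\) neither the identity nor the longest permutation, are pairwise distinct by uniqueness of the Garside left normal form.

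Your proposed invariants are also not available. Once the braid is closed into a single knot there are no separate strand components, so their pairwise linking numbers are not knot invariants; the paper uses linking matrices only for links, in Proposition~\ref{prop:addressed-linking}. Recovering the braid from the knot type through its satellite structure is exactly the nontrivial injectivity theorem of Malyutin--Stupakov, which you assume rather than supply.

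The repair is short: use the factorial count as a cited theorem. Malyutin--Stupakov's Theorem~1 gives at least \(\lfloor (k+4)/11\rfloor!\) oriented prime knot types of arc index at most \(k\) for \(k>5\). Forgetting orientation loses at most a factor \(2\). With your inequality \(\operatorname{stick}(K)\le 2\alpha(K)\), take \(k=\lfloor N/2\rfloor\). The paper instead uses Huh--Oh's \(\operatorname{stick}(K)\le\frac32(\alpha(K)-1)\) with \(k=\lfloor 2N/3\rfloor+1\). Either way Stirling gives \(\log|\Kcal_N|\ge cN\log N\), and your argument becomes the paper's up to constants.

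One smaller point: your \(2\alpha\)-stick realization (verticals at height \(1\), horizontals at height \(0\), corner steps absorbed into slanted sticks) does not literally work, since each vertex would have to lie on both levels. The inequality is nonetheless standard. Replace each arc of an arc presentation by a two-stick path inside its own page; distinct pages meet only along the binding axis.
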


\begin{proof}
The upper bound follows from Corollary~\ref{cor:knot-types}.  For the lower
bound, Malyutin--Stupakov prove that the number of oriented prime-knot classes
with arc index at most \(k\) is at least
\[
    \left\lfloor\frac{k+4}{11}\right\rfloor!
\]
for \(k>5\) \cite[Theorem 1]{MalyutinStupakovArc}.  Take
\(k=\lfloor2N/3\rfloor+1\).  The Huh--Oh inequality
\cite[Theorem 3]{HuhOh} gives
\[
    \operatorname{stick}(K)
    \le \frac32(k-1)
    =\frac32\left\lfloor\frac{2N}{3}\right\rfloor
    \le N.
\]
Thus every resulting knot is within the stick budget, and forgetting
orientation costs at most a factor of two.  Hence
\[
 |\Kcal_N|
 \ge \frac12
 \left\lfloor\frac{\lfloor2N/3\rfloor+5}{11}\right\rfloor!
 =N^{(2/33+o(1))N}.
\]
Stirling's formula and the upper estimate
\(|\Kcal_N|\le N^{3N+o(N)}\) complete the proof.
\end{proof}

\begin{remark}[Upper bounds for links]\label{rem:link-upper}
Let \(\Lcal_N\) be the set of ambient isotopy classes of ordinary unoriented
unordered links in \(\R^3\) that admit a polygonal representative with at most
\(N\) total sticks.  The same semialgebraic upper-bound mechanism gives, after
summing over component decompositions, an estimate
\[
    |\Lcal_N|\le N^{4N+o(N)}.
\]

Indeed, a fixed labeled link combinatorics is a cyclic decomposition of the
\(N\) labeled vertices, equivalently a permutation of those vertices up to the
irrelevant choices of starting point on each cycle.  Thus there are at most
\(N!\) labeled component decompositions to consider.  For each such
decomposition, the number of pairs of non--incident edges is at most \(N^2/2\),
and there are exactly \(N\) cyclically adjacent edge pairs across all
components.  Add the corresponding \(N\) squared-cross-product walls to the
non--incident determinant walls.  The same argument as in
Theorem~\ref{thm:upper} then gives at
most
\[
    2(A N)^{3N-12}
\]
semialgebraic chambers for that fixed decomposition.  Every unmarked unordered
link type with total stick number at most \(N\) is represented in at least one
of these labeled models after subdividing edges and making an arbitrarily small
full-dimensional perturbation.  Passing from labeled component data to ordinary
unordered links can only identify types, not create new ones.  Therefore
\[
    |\Lcal_N|
    \le
    2N!\,(A N)^{3N-12}
    =
    N^{4N+o(N)}.
\]
The link case is not literally the single cyclic-polygon Stiefel model: one
must also account for the number of components and their cyclic decompositions.
\end{remark}

\begin{remark}[Upper bounds versus lower bounds]
The chamber-to-knot-type map is surjective onto \(\Kcal_N\), but it is not
injective.  Many chambers can carry the same isotopy class; this already occurs
in Calvo's low-\(N\) computations \cite{Calvo}.  Consequently an upper bound for
\(C_N\) is automatically an upper bound for \(|\Kcal_N|\), but a lower bound for
\(C_N\) is not a lower bound for the number of knot types unless one separately
distinguishes the isotopy classes carried by those chambers.
\end{remark}

\section{A quantitative lower-bound refinement}

Theorem~\ref{thm:asymptotic-order} already settles the logarithmic growth
order using the printed Malyutin--Stupakov theorem.  We now use their stronger
Proposition~1, which retains the number of permutation-braid factors as a
parameter, to sharpen the lower bound.  Combining that flexibility with a
large language of Garside normal forms gives the estimate proved below.  We
include all combinatorial details because our argument extracts a consequence
of their proposition rather than restating their printed factorial theorem.

For \(w=w_1\cdots w_n\in S_n\), write
\[
    \operatorname{Des}(w)=\{i\in\{1,\ldots,n-1\}:w_i>w_{i+1}\},
    \qquad
    D_n=\{1,3,5,\ldots\}\cap\{1,\ldots,n-1\}.
\]
Let \(E_n\) be the Euler zigzag number, so that
\(E_n=\#\{w:\operatorname{Des}(w)=D_n\}\), and set
\[
    G_n=\{w\in S_n:
       \operatorname{Des}(w)=\operatorname{Des}(w^{-1})=D_n\},
    \qquad g_n=|G_n|.
\]
Thus \(G_n\) is the set of doubly alternating permutations, in Stanley's
down--up convention.

\begin{lemma}[RSK lower bound]\label{lem:doubly-alternating}
For every \(n\ge1\),
\[
    g_n\ge \frac{E_n^2}{n!}.
\]
Moreover,
\[
    \log g_n=\log(n!)-O(n).
\]
\end{lemma}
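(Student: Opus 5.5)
Under RSK, a permutation \(w\in S_n\) corresponds bijectively to a pair \((P,Q)\) of standard Young tableaux of a common shape \(\lambda\vdash n\). The descent set of \(w\) equals the descent set of the recording tableau \(Q\), and the descent set of \(w^{-1}\) equals that of the insertion tableau \(P\), because inverting \(w\) swaps \(P\) and \(Q\). Here \(i\) is a descent of a tableau when \(i+1\) lies in a strictly lower row. Write \(a_\lambda\) for the number of standard tableaux of shape \(\lambda\) with descent set exactly \(D_n\). Counting \(w\) with \(\operatorname{Des}(w)=D_n\) gives
\[
 E_n=\sum_{\lambda\vdash n} f^\lambda a_\lambda ,
\]
where \(f^\lambda\) is the number of standard tableaux of shape \(\lambda\). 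Counting doubly alternating permutations gives
\[
 g_n=\sum_{\lambda\vdash n} a_\lambda^2 .
\]

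The inequality then follows from Cauchy--Schwarz together with \(\sum_\lambda (f^\lambda)^2=n!\):
\[
 E_n^2=\Bigl(\sum_\lambda f^\lambda a_\lambda\Bigr)^2\le \Bigl(\sum_\lambda (f^\lambda)^2\Bigr)\Bigl(\sum_\lambda a_\lambda^2\Bigr)=n!\,g_n .
\]
This is the first claim. The step needing most care is the descent correspondence \(\operatorname{Des}(w^{-1})=\operatorname{Des}(P(w))\), and that it holds for both \(w\) and \(w^{-1}\) in the same convention. I would cite the standard fact (Schützenberger; Stanley, EC2, Lemma 7.23.1) rather than reprove it. The alternative convention \(\operatorname{Des}(w)=\operatorname{Des}(P)\) only swaps the roles of \(P\) and \(Q\) and leaves the argument unchanged.

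For the asymptotics, the upper bound \(g_n\le E_n\le n!\) is trivial. For the lower bound I would use the classical asymptotic
\[
 E_n\sim 2\left(\frac{2}{\pi}\right)^{n+1} n! ,
\]
which comes from the generating function \(\sec x+\tan x\) and its simple pole at \(\pi/2\). Substituting into \(g_n\ge E_n^2/n!\) gives
\[
 g_n\ge c\,(2/\pi)^{2n}\,n!
\]
for a constant \(c>0\) and all large \(n\). Hence \(\log g_n\ge \log n!-2n\log(\pi/2)-O(1)\). Small \(n\) are absorbed into the \(O(n)\) term, since \(g_n\ge1\); for instance the permutation \(2143\cdots\) shows that each \(G_n\) is nonempty. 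Combined with \(g_n\le n!\), this gives \(\log g_n=\log(n!)-O(n)\).
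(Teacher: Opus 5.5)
Your proposal is correct and follows essentially the same route as the paper. Both use the RSK shape-by-shape identities \(E_n=\sum_\lambda a_\lambda f^\lambda\), \(g_n=\sum_\lambda a_\lambda^2\) and \(n!=\sum_\lambda (f^\lambda)^2\) with Cauchy--Schwarz, and then the Euler-number asymptotic together with \(g_n\le n!\); your \(2(2/\pi)^{n+1}n!\) is exactly Stanley's \(\frac4\pi(2/\pi)^n n!\), and your explicit small-\(n\) handling via the involution \(2143\cdots\) is a harmless addition.
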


\begin{proof}
For a partition \(\lambda\vdash n\), let \(f^\lambda\) be the number of
standard Young tableaux of shape \(\lambda\), and let \(a_\lambda\) be the
number of those tableaux whose descent set is \(D_n\).  Under row-insertion
RSK, \(w\mapsto(P(w),Q(w))\), the adjacent-letter bumping lemma gives
\[
    \operatorname{Des}(w)=\operatorname{Des}(Q(w)).
\]
The inversion symmetry
\(\operatorname{RSK}(w^{-1})=(Q(w),P(w))\) therefore also gives
\(\operatorname{Des}(w^{-1})=\operatorname{Des}(P(w))\).  Consequently,
shape by shape,
\[
    E_n=\sum_{\lambda\vdash n}a_\lambda f^\lambda,
    \qquad
    g_n=\sum_{\lambda\vdash n}a_\lambda^2,
    \qquad
    n!=\sum_{\lambda\vdash n}(f^\lambda)^2.
\]
The first identity is also the RSK interpretation recorded by Stanley
\cite[Section 5]{StanleyAlternating}.  Cauchy--Schwarz applied to the first
sum proves \(E_n^2\le g_n n!\).

Finally, Stanley's Euler-number estimate
\cite[Equation (1.10)]{StanleyAlternating} is
\[
    \frac{E_n}{n!}
      =\frac4\pi\left(\frac2\pi\right)^n
       +O\!\left(\left(\frac{2}{3\pi}\right)^n\right).
\]
Hence \(\log E_n=\log(n!)+n\log(2/\pi)+O(1)\).  The preceding lower
bound and the trivial inequality \(g_n\le n!\) now give
\(\log g_n=\log(n!)-O(n)\).
\end{proof}

\begin{lemma}[Many short pure braids]\label{lem:short-pure-braids}
For integers \(n\ge3\) and \(q\ge1\), the pure braid group \(PB_n\) contains
at least
\[
    \left\lceil\frac{g_n^q}{n!}\right\rceil
\]
distinct elements, each expressible as a product of at most \(q+1\) positive
permutation braids.
\end{lemma}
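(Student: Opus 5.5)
We consider the positive braids
\[
    \beta(w_1,\ldots,w_q)=\Delta_{w_1}\Delta_{w_2}\cdots\Delta_{w_q},
    \qquad w_1,\ldots,w_q\in G_n,
\]
where \(\Delta_w\) denotes the positive permutation braid lifting \(w\in S_n\). The plan is to show that these words are Garside normal forms, so that distinct tuples give distinct braids. We then pigeonhole on the image permutation and multiply by one further permutation braid to make each braid pure.

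\emph{Step 1 (normal forms).} A product \(\Delta_{u}\Delta_{v}\) of permutation braids is left-weighted exactly when \(S(\Delta_v)\subseteq F(\Delta_u)\). Here the starting set \(S(\Delta_v)\) is the descent set of \(v^{-1}\), or of \(v\), depending on convention. The finishing set \(F(\Delta_u)\) is the descent set of \(u\), or of \(u^{-1}\), under the matching convention. For \(w\in G_n\) both \(\operatorname{Des}(w)\) and \(\operatorname{Des}(w^{-1})\) equal \(D_n\), so the inclusion \(S\subseteq F\) holds as the equality \(D_n=D_n\) whichever convention is in force. This is precisely why the doubly alternating condition is imposed.

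We also need each \(\Delta_w\) to be a proper simple element, neither the identity nor the half twist \(\Delta\). For \(n\ge3\) the descent set \(D_n\) is nonempty and is not all of \(\{1,\ldots,n-1\}\), since \(2\notin D_n\). Hence \(w\neq 1\) and \(w\neq w_0\).

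By uniqueness of the left-greedy normal form, the map \((w_1,\ldots,w_q)\mapsto\beta\) is therefore injective on \(G_n^q\). This produces \(g_n^q\) distinct positive braids, each a product of \(q\) permutation braids. Checking the convention for \(S\) and \(F\) carefully, so that the inclusion goes the right way, is the main technical point, but the symmetry \(\operatorname{Des}(w)=\operatorname{Des}(w^{-1})\) makes it robust.

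\emph{Step 2 (pigeonhole).} Each braid \(\beta\) has an image permutation \(\pi(\beta)\in S_n\). Among the \(g_n^q\) braids, some fibre of \(\pi\) contains at least \(\lceil g_n^q/n!\rceil\) of them, all sharing a common permutation \(\sigma\).

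\emph{Step 3 (making them pure).} Multiply every braid in that fibre on the right by the fixed permutation braid \(\Delta_{\sigma^{-1}}\). Each product maps to \(\sigma\sigma^{-1}=1\), so it is a pure braid. Each product is also a product of at most \(q+1\) positive permutation braids.

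Right multiplication by a fixed group element is injective, so distinctness of the braids is preserved. This gives the claimed count of at least \(\lceil g_n^q/n!\rceil\) distinct elements of \(PB_n\).
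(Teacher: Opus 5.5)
Your proof is correct and follows essentially the same route as the paper. In both, the doubly alternating factors make each word $A_{w_1}\cdots A_{w_q}$ its own Elrifai--Morton left normal form, with $D_n$ nonempty and proper ruling out the identity and the half twist; one then pigeonholes on the endpoint permutation and right-multiplies by a fixed inverse permutation braid, and the resulting pure braids are distinct by cancellation. Your observation that $\operatorname{Des}(w)=\operatorname{Des}(w^{-1})=D_n$ makes the left-weightedness check independent of the $S/F$ convention is sound, whereas the paper simply fixes the convention explicitly.
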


\begin{proof}
Let \(A_w\) be the positive permutation braid inducing \(w\in S_n\).
Elrifai--Morton identify these braids with the simple elements in the classical
Garside structure.  In their convention, the starting and finishing sets are
\[
    S(A_w)=\operatorname{Des}(w),
    \qquad
    F(A_w)=\operatorname{Des}(w^{-1});
\]
the second identity follows from \(F(A)=S(\operatorname{rev}A)\).  Their left
normal-form criterion says that a sequence of simple factors is canonical
when \(S(A_{i+1})\subseteq F(A_i)\), and that the resulting expression is
unique
\cite[Lemmas 2.3 and 2.4 and Theorem 2.9]{ElrifaiMorton}.

For every \(w\in G_n\), both sets above equal \(D_n\).  Since \(n\ge3\),
\(D_n\) is nonempty and proper, so \(A_w\) is neither the identity nor the
Garside half twist.  It follows that every word
\[
    A_{w_1}A_{w_2}\cdots A_{w_q},
    \qquad (w_1,\ldots,w_q)\in G_n^q,
\]
is already its unique left normal form.  The \(g_n^q\) tuples therefore give
distinct positive braids.

Let \(\rho:B_n\to S_n\) be the endpoint-permutation homomorphism.  By the
pigeonhole principle, at least \(\lceil g_n^q/n!\rceil\) of the preceding
braids have a common endpoint permutation \(u\).  Append the same positive
permutation braid \(A_{u^{-1}}\) to each of them.  Every resulting braid is
pure, and right cancellation in \(B_n\) shows that they remain distinct.  If
the appended braid is the identity it may be omitted; hence each result is a
product of at most \(q+1\) positive permutation braids.
\end{proof}

\begin{theorem}[Finite stick-number lower bound]\label{thm:finite-stick-lower}
For \(n\ge3\) and \(q\ge1\), put
\[
    a_{n,q}=n\max\{11,q+9\}-4,
    \qquad
    s_{n,q}=\left\lfloor\frac32\bigl(a_{n,q}-1\bigr)\right\rfloor.
\]
The number of ordinary unoriented prime satellite-knot types with stick number
at most \(s_{n,q}\) is at least
\[
    \left\lceil
       \frac12\left\lceil\frac{g_n^q}{n!}\right\rceil
    \right\rceil
    \ \ge\
    \frac{E_n^{2q}}{2(n!)^{q+1}}.
\]
\end{theorem}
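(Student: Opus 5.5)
The plan is to feed the pure braids produced by Lemma~\ref{lem:short-pure-braids} into the Malyutin--Stupakov satellite construction, in the form of their Proposition~1, which the paper says keeps the number of permutation-braid factors as a free parameter.

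\emph{Step 1 (input braids).} Fix \(n\ge3\) and \(q\ge1\). By Lemma~\ref{lem:short-pure-braids} there is a set \(\mathcal P\subset PB_n\) with
\[
    |\mathcal P|\ge\left\lceil g_n^q/n!\right\rceil .
\]
Every element of \(\mathcal P\) is a product of at most \(q+1\) positive permutation braids.

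\emph{Step 2 (apply Proposition~1).} I expect Proposition~1 of \cite{MalyutinStupakovArc} to have roughly the following shape. To a pure braid \(\beta\in PB_n\) given as a product of \(\ell\) permutation braids, it attaches an oriented prime satellite knot \(K_\beta\) (a fixed pattern, with \(\beta\) inserted, in a fixed companion). Its arc index is at most \(n\max\{11,\ell+8\}-4\), and the map \(\beta\mapsto K_\beta\) is injective on \(PB_n\), since the pure braid can be read off from the satellite structure via JSJ uniqueness. With \(\ell=q+1\), this bound is \(n\max\{11,q+9\}-4=a_{n,q}\).

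The main obstacle is matching the paper's constants \(a_{n,q}\) to the exact hypotheses of that proposition. I would first check whether its bound is phrased per factor or per letter, whether an omitted identity factor changes \(\ell\), and whether the arc index bound is monotone in \(\ell\). Monotonicity matters because braids with fewer than \(q+1\) factors must also land within \(a_{n,q}\). If the proposition instead requires exactly \(\ell\) factors, I would pad with trivial factors, or use \(A_{u^{-1}}\) as in the lemma.

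\emph{Step 3 (arc index to stick number).} The Huh--Oh inequality \cite[Theorem 3]{HuhOh} gives
\[
    \operatorname{stick}(K)\le\tfrac32(\alpha(K)-1).
\]
Since stick number is an integer, \(\alpha(K_\beta)\le a_{n,q}\) yields \(\operatorname{stick}(K_\beta)\le s_{n,q}\).

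\emph{Step 4 (count).} Distinct \(\beta\) give distinct oriented prime satellite knots. Forgetting orientation is at most two-to-one, since reversal is the only identification. Hence at least \(\lceil\frac12\lceil g_n^q/n!\rceil\rceil\) unoriented types occur. The final inequality follows from Lemma~\ref{lem:doubly-alternating}:
\[
    \frac{g_n^q}{n!}\ge\frac{E_n^{2q}}{(n!)^{q}\,n!},
\]
and the ceilings only increase the left side.
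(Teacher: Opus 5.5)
Your proposal is correct and follows the paper's proof essentially step for step. You feed the braids of Lemma~\ref{lem:short-pure-braids}, of factor length at most $q+1$, into the Malyutin--Stupakov injection; its bound $n\max\{11,s+8\}-4$ is nondecreasing in $s$, so no padding is needed. You then pay a factor of two for forgetting orientation, use Huh--Oh to pass from arc index to stick number, and take the last inequality from Lemma~\ref{lem:doubly-alternating}. The one hypothesis you should state explicitly is that Huh--Oh's Theorem~3 holds only for nontrivial knots; it fails for the unknot, where $\alpha=2$ and the stick number is $3$. The paper covers this by noting that prime satellite knots are nontrivial.
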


\begin{proof}
Malyutin--Stupakov construct, for every \(n>2\), an injection from \(PB_n\)
to oriented knot types.  Every knot in its image is prime and satellite, and
if the input braid is a product of \(s\) permutation braids, then the image has
arc index at most
\[
    n\max\{11,s+8\}-4
\]
\cite[Proposition 1, pp.~3--4]{MalyutinStupakovArc}.  Apply this injection to
the braids from Lemma~\ref{lem:short-pure-braids}.  Since their factor length
is at most \(q+1\), their images have arc index at most \(a_{n,q}\).

Forgetting knot orientation has fibers of size at most two, so at least the
first displayed number of ordinary knot types remain.  Prime satellite knots
are nontrivial, and Huh--Oh prove for every nontrivial knot that
\[
    \operatorname{stick}(K)
       \le\frac32\bigl(\alpha(K)-1\bigr)
\]
\cite[Theorem 3]{HuhOh}; thus all these knots have stick number at most
\(s_{n,q}\).  The last inequality follows from
Lemma~\ref{lem:doubly-alternating}.
\end{proof}

We now optimize the stick budget as \(N\to\infty\).  For sufficiently large \(N\),
set
\[
    q_N=\lfloor\log N\rfloor,
    \qquad
    n_N=\left\lfloor
       \frac{2N/3+5}{q_N+9}
    \right\rfloor.
\]
Then \(q_N\ge2\), \(n_N\ge3\), and
\[
 \frac32\bigl(n_N(q_N+9)-5\bigr)\le N.
\]
Theorem~\ref{thm:finite-stick-lower} and Lemma~\ref{lem:doubly-alternating}
therefore give prime satellite knots in \(\Kcal_N\) satisfying
\[
 \begin{aligned}
 \log|\Kcal_N|
 &\ge
   2q_N\log E_{n_N}-(q_N+1)\log(n_N!)-\log2 \\
 &= (q_N-1)\log(n_N!)-O(q_Nn_N) \\
 &= (q_N-1)n_N\log n_N-O(q_Nn_N).
 \end{aligned}
\]
Here
\[
 q_Nn_N=O(N),\qquad
 (q_N-1)n_N=\frac23N+O\!\left(\frac{N}{\log N}\right),
 \qquad
 \log n_N=\log N-\log\log N+O(1).
\]
Consequently
\[
    \log|\Kcal_N|
       \ge \frac23N\log N-\frac23N\log\log N-O(N),
\]
and in particular
\[
    |\Kcal_N|\ge N^{(2/3+o(1))N}.
\]
This lower bound already counts only prime satellite knot types.

\begin{remark}[At most \(N\) versus an \(N\)-edge representative]\label{rem:exact-N}
These two formulations count the same knot types.  Starting from any
embedded polygon with fewer than \(N\) edges, subdivide an edge and replace
the new collinear vertex by a sufficiently small local kink.  PL stability
preserves the ambient isotopy class and makes the two new incident edges
genuine noncollinear sticks.  Repeating gives an embedded representative
with exactly \(N\) polygon edges.  Hence the lower and upper bounds in this section
apply both to stick number at most \(N\) and to realizability by an
\(N\)-edge polygon.  This is not a statement about the shell
\(\{K:\operatorname{stick}(K)=N\}\); no comparable lower bound for every
exact-minimal-stick shell is asserted here.
\end{remark}

\begin{corollary}[Refined asymptotic bounds]
As \(N\to\infty\),
\[
 N^{(2/3+o(1))N}
 \le |\Kcal_N|
 \le N^{3N+o(N)}.
\]
In particular
\[
 \log|\Kcal_N|=\Theta(N\log N),
\]
or equivalently \(|\Kcal_N|=N^{\Theta(N)}\).  These asymptotic bounds determine
the order of growth of the logarithm.  They do not furnish an asymptotic
equivalent for \(|\Kcal_N|\).  Nor do they determine whether the normalized
logarithm
\[
    \frac{\log|\Kcal_N|}{N\log N}
\]
has a limit.
\end{corollary}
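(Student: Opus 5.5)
The corollary is a packaging of two bounds already in hand, so the plan is to quote each side and then pass to logarithms. For the upper side I will invoke Corollary~\ref{cor:knot-types}, which gives \(|\Kcal_N|\le C_N\le 2(AN)^{3N-12}\). Taking logarithms,
\[
\log|\Kcal_N|\le \log 2+(3N-12)(\log A+\log N)=3N\log N+O(N),
\]
and \(O(N)=o(N\log N)\), so this is \(|\Kcal_N|\le N^{3N+o(N)}\). For the lower side I will use the estimate derived immediately before Remark~\ref{rem:exact-N}, namely
\[
\log|\Kcal_N|\ge \tfrac23N\log N-\tfrac23N\log\log N-O(N).
\]
That estimate was obtained by applying Theorem~\ref{thm:finite-stick-lower} with the choices \(q_N=\lfloor\log N\rfloor\) and \(n_N=\lfloor(2N/3+5)/(q_N+9)\rfloor\), then using Lemma~\ref{lem:doubly-alternating} together with Stirling's formula. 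Since \(N\log\log N+N=o(N\log N)\), the right-hand side is \((2/3+o(1))N\log N\), which is \(|\Kcal_N|\ge N^{(2/3+o(1))N}\).

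One point needs checking for the lower side to count members of \(\Kcal_N\). The stick budget \(s_{n,q}\) of Theorem~\ref{thm:finite-stick-lower}, taken at \((n_N,q_N)\), must be at most \(N\). Since \(q_N\ge2\), we have \(\max\{11,q_N+9\}=q_N+9\), so \(a_{n,q}=n_N(q_N+9)-4\). Then \(s_{n,q}\le\frac32(n_N(q_N+9)-5)\le N\), which is the inequality already recorded in the optimization paragraph. By Remark~\ref{rem:exact-N}, the notions ``at most \(N\) sticks'' and ``an \(N\)-edge representative'' count the same knot types, so both bounds apply to the same set.

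Dividing both bounds by \(N\log N\) then gives
\[
\tfrac23-o(1)\le \frac{\log|\Kcal_N|}{N\log N}\le 3+o(1),
\]
which is exactly \(\log|\Kcal_N|=\Theta(N\log N)\), or equivalently \(|\Kcal_N|=N^{\Theta(N)}\). The closing caveats of the corollary need no proof beyond this observation. The gap between the constants \(2/3\) and \(3\) means the argument gives neither an asymptotic equivalent for \(|\Kcal_N|\) nor the existence of a limit for the normalized ratio.

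The only step with real content is the lower-side asymptotic. In particular, one must confirm that \((q_N-1)n_N=\frac23N+O(N/\log N)\) and \(\log n_N=\log N-\log\log N+O(1)\). Both follow from \(n_N\sim 2N/(3\log N)\), and both were already carried out in the text, so I expect no genuine obstacle.
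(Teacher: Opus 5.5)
Your proposal is correct and follows the paper's route: the paper gives no separate proof, and the corollary is simply Corollary~\ref{cor:knot-types} combined with the lower-bound optimization preceding Remark~\ref{rem:exact-N}, just as you assemble it. One small imprecision: $n_N\sim 2N/(3\log N)$ alone yields only $(q_N-1)n_N=\frac23N+o(N)$, and the $O(N/\log N)$ error needs the exact floor formula with $(q_N-1)/(q_N+9)=1-10/(q_N+9)$; the weaker form already suffices for $N^{(2/3+o(1))N}$.
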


For ordinary links the lower-bound problem is more direct.  In this paragraph
all links are unoriented and unordered, and
\[
    \Lcal_N
    =
    \{L:\text{\(L\) admits a polygonal representative with at most \(N\)
    total sticks}\}.
\]
The invariant used below is the matrix of absolute pairwise linking numbers,
viewed up to simultaneous permutation of the components.  This is an invariant
of unoriented unordered ambient isotopy.

\begin{proposition}[Addressed linking matrices]\label{prop:addressed-linking}
There is an absolute constant \(B_0>0\) with the following property.  For every
\(m\ge2\) and every symmetric matrix
\[
    A=(a_{ij})_{1\le i,j\le m},\qquad
    a_{ii}=0,\quad a_{ij}\in\mathbb Z_{\ge0},
\]
there is an ordinary unoriented unordered link \(L(A)\) such that
\[
    \operatorname{stick}(L(A))
    \le
    B_0\left(m+\sum_{i<j}a_{ij}\right),
\]
and \(L(A)\) and \(L(A')\) are ambient isotopic only if \(A=A'\).
\end{proposition}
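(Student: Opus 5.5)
We will build \(L(A)\) explicitly with one unknotted polygonal component \(K_i\) for each index \(i\), and show that the matrix of absolute pairwise linking numbers of \(L(A)\) is exactly \(A\), up to simultaneous permutation.

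\textbf{Distinguishing links.} A priori, ambient isotopy recovers \(A\) only up to simultaneous permutation, which is not enough for the claim \(L(A)\cong L(A')\Rightarrow A=A'\). We therefore add a marking: component \(i\) will carry a local tag that identifies its index. Taking these tags to be knot types would cost about \(i\) sticks each, for \(\Theta(m^2)\) in total, which is too many. Instead we attach to each \(K_i\) an unlinked Hopf-free ``address'' built from linking data. Concretely, we add a small auxiliary frame: a chain of \(m\) extra unknotted components \(U_1,\ldots,U_m\), with \(U_i\) linking \(K_i\) exactly \(c\) times for a fixed large constant \(c\). The \(U_i\) link each other in a path pattern \(U_i\)–\(U_{i+1}\), with linking number \(c'\ne c\), and chosen so the path has a distinguished end (for instance, \(U_1\) also links one extra small ring with linking number \(c''\)). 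This adds only \(O(m)\) components, each with \(O(1)\) sticks.

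\textbf{Recovering \(A\).} From the absolute linking matrix of the whole link, up to permutation, we read off the following.
\begin{itemize}
\item The components with a linking entry equal to \(c''\) identify the special ring and \(U_1\).
\item Following the entries equal to \(c'\) recovers the ordering \(U_1,\ldots,U_m\).
\item The entry \(c\) then matches each \(U_i\) to its partner \(K_i\).
\end{itemize}
For the constants we take \(c,c',c''\) to be distinct values that can never coincide with any entry \(a_{ij}\). The simplest way to guarantee this is to scale: replace \(a_{ij}\) by \(4a_{ij}\) and use \(c,c',c''\in\{1,2,3\}\). Scaling costs only a constant factor in sticks. With this choice the labeled \(K\)-block, and hence \(A\), is determined.

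\textbf{Stick bound.} This is the step I expect to be the main obstacle. We must realize prescribed pairwise linking numbers \(a_{ij}\) among unknots using \(O(m+\sum a_{ij})\) sticks in total, with no per-pair overhead for pairs where \(a_{ij}=0\).

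My plan is to place the \(K_i\) as long parallel horizontal ``rails'' (plus return sticks) at distinct heights and \(y\)-offsets, each initially an unknotted flat rectangle, so that they are pairwise unlinked and the base cost is \(O(m)\) sticks. For each unit of \(a_{ij}\), we insert on rail \(i\) a local clasp gadget: a constant number of extra sticks forming a small detour that reaches over to rail \(j\), passes once around it, and returns. Passing around rail \(j\) changes \(\operatorname{lk}(K_i,K_j)\) by \(\pm1\), and we choose the sign consistently. The gadgets are placed at disjoint \(x\)-positions and routed vertically in clear space, so that they cross no other rails.

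The care needed is in the geometry: a detour from rail \(i\) to rail \(j\) must not wind around intermediate rails. Arranging the rails in a convex-position configuration in the \((y,z)\)-plane, so that every pair is joined by a straight corridor avoiding the others, handles this. Then each gadget costs \(O(1)\) sticks regardless of \(|i-j|\), which gives the claimed bound with an absolute constant \(B_0\).
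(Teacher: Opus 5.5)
Your proof is correct and follows essentially the same route as the paper: an $O(m)$-component address frame whose linking pattern pins down the labels of the payload components. You decode the frame with three distinct small values $c,c',c''$ after scaling the payload by $4$; the paper instead uses odd frame entries, doubled payload entries, and a mod-$2$ caterpillar graph whose only automorphism swaps two leaves. The link is then realized by parallel rails with constant-size clasps in disjoint slabs. Your convex-position corridor argument, with the return sticks kept away from the chords (e.g.\ outside the box or far to one side), actually spells out a routing detail that the paper only asserts when it says each clasp's supporting ball meets no other component.
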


\begin{figure}[ht]
\centering
\begin{tikzpicture}[
    x=1cm,y=1cm,
    line cap=round,line join=round,
    strandX/.style={very thick,blue!65!black},
    strandY/.style={very thick,red!70!black}
]
    \draw[gray!60] (-0.9,-1.3) rectangle (0.9,1.3);

    \draw[strandX]
        (-0.55,1.3)
        .. controls (-1.45,1.15) and (-1.45,-1.15) ..
        (-0.55,-1.3);
    \draw[strandY]
        (0.55,1.3)
        .. controls (1.45,1.15) and (1.45,-1.15) ..
        (0.55,-1.3);

    \draw[strandX]
        (-0.55,1.3) -- (0.55,0.15) -- (-0.55,-1.3);
    \draw[strandY]
        (0.55,1.3) -- (-0.55,0.15) -- (0.55,-1.3);

    \draw[white,line width=6pt] (-0.138,0.855) -- (0.138,0.595);
    \draw[strandX]               (-0.138,0.855) -- (0.138,0.595);

    \draw[white,line width=6pt] (-0.109,-0.432) -- (0.109,-0.718);
    \draw[strandY]              (-0.109,-0.432) -- (0.109,-0.718);

    \node[blue!65!black] at (-1.65,0) {\(X\)};
    \node[red!70!black] at (1.65,0) {\(Y\)};
\end{tikzpicture}
\caption{The componentwise closure of this tangle is a Hopf link.}
\label{fig:hopf-clasp}
\end{figure}

\begin{proof}
We first prescribe an abstract absolute linking matrix and then realize it by a
polygonal link.  Use frame components
\[
    R,F_1,\ldots,F_m,U,V
\]
and payload components
\[
    P_1,\ldots,P_m.
\]
The odd part of the absolute linking matrix is fixed as follows:
\[
    |\operatorname{lk}(R,U)|
    =
    |\operatorname{lk}(R,V)|
    =
    |\operatorname{lk}(R,F_1)|
    =
    1,
\]
\[
    |\operatorname{lk}(F_i,F_{i+1})|=1\quad(1\le i<m),
    \qquad
    |\operatorname{lk}(F_i,P_i)|=1\quad(1\le i\le m),
\]
and all other entries involving a frame component have even value \(0\).  Among
payload components set
\[
    |\operatorname{lk}(P_i,P_j)|=2a_{ij}.
\]

Reducing the absolute linking matrix modulo \(2\) gives a graph.  Its vertices
are the components; its edges are precisely the odd linking pairs above.  This
graph has only one nontrivial automorphism, namely the transposition of the two
leaves \(U,V\).  In particular it fixes \(R\), then fixes the path
\[
    F_1,F_2,\ldots,F_m
\]
in order, and therefore fixes each payload component \(P_i\).  Hence an
unoriented unordered ambient isotopy between two such links must identify the
payload submatrix entry-by-entry.  Since the payload entries are \(2a_{ij}\),
the matrix \(A\) is recovered from the ordinary unmarked link type.

It remains to realize the prescribed matrix with the stated stick cost.  Here
is a concrete routing model.  Give every component a private straight rail
through a long rectangular box, with the rails mutually parallel and
separated.  Allocate disjoint thin slabs along the rail direction, one slab
for each required unit of linking.  In a slab assigned to components \(X,Y\),
leave every other rail straight and replace short subsegments of the \(X\)-
and \(Y\)-rails by a fixed polygonal realization of the two-strand full twist
\(\sigma_1^2\), or its mirror \(\sigma_1^{-2}\), as in
Figure~\ref{fig:hopf-clasp}.  Its endpoints are matched to the original rail
segments on the same respective sides of the slab.  The slabs and small
tubular neighborhoods of all rails are disjoint.  Outside the long box, close
each rail by a return arc in its own separated height plane; these closing arcs
can be chosen as polygonal rectangles and introduce no linking.

After choosing auxiliary orientations, choose between the two mirror-image
full twists so that its two crossings have positive sign.  The crossing
formula
\[
    \operatorname{lk}(X,Y)
    =\frac12\sum_{p\in X\cap Y}\operatorname{sign}(p)
\]
shows that each inserted twist raises \(\operatorname{lk}(X,Y)\) by one.
Because its supporting ball meets no other component, it changes no other
pairwise linking number.  Each component uses a fixed number of sticks for its
rail and return arc, and each clasp replaces two straight subsegments by a
fixed bounded number of sticks.  Thus a diagram
with \(q\) clasps has at most \(c_0m+c_1q\) sticks for universal constants
\(c_0,c_1\).  Since the final invariant uses absolute values, the auxiliary
orientations are irrelevant.  The fixed odd address graph uses \(O(m)\)
clasps, while the payload matrix uses
\(2\sum_{i<j}a_{ij}\) clasps.  Enlarging one universal constant \(B_0\) gives
the claimed estimate.
\end{proof}

\begin{theorem}[Unmarked unordered link production]\label{thm:link-lower}
There is an absolute constant \(B>0\) such that, for all \(n\ge2\),
\[
    |\Lcal_{Bn}|
    \ge
    \binom{\binom n2+n}{n}
    \ge
    n^n\exp(-O(n)).
\]
Consequently, for some constants \(c,C>0\),
\[
    N^{cN}\le |\Lcal_N|\le N^{CN}
\]
for all sufficiently large \(N\).
\end{theorem}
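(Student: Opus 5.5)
The plan is to apply Proposition~\ref{prop:addressed-linking} to a family of payload matrices whose total entry sum is bounded by \(n\). Consider symmetric matrices \(A\) of size \(m=n\) with zero diagonal and nonnegative integer entries satisfying \(\sum_{i<j}a_{ij}\le n\). Count them by stars and bars: there are \(\binom n2\) off-diagonal slots, so the number of solutions of \(\sum_{i<j}a_{ij}\le n\) is the number of ways to distribute \(n\) units among \(\binom n2+1\) bins, namely \(\binom{\binom n2+n}{n}\). For each such \(A\), Proposition~\ref{prop:addressed-linking} provides a link \(L(A)\) with
\[
\operatorname{stick}(L(A))\le B_0(n+n)=2B_0n,
\]
and distinct \(A\) give non-isotopic unordered unoriented links. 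Taking \(B=2B_0\) (rounded up to an integer if needed) yields \(|\Lcal_{Bn}|\ge\binom{\binom n2+n}{n}\).

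Next I bound the binomial coefficient from below by \(\binom{M}{n}\ge (M/n)^n\) with \(M=\binom n2+n=n(n+1)/2\). This gives \((M/n)^n=((n+1)/2)^n\ge n^n2^{-n}=n^n\exp(-O(n))\).

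For the asymptotic statement, the upper bound \(|\Lcal_N|\le N^{4N+o(N)}\) is Remark~\ref{rem:link-upper}, so any \(C>4\) works for large \(N\). For the lower bound, given large \(N\), set \(n=\lfloor N/B\rfloor\ge2\). Since \(\Lcal_N\) is monotone in \(N\) (Remark~\ref{rem:exact-N} and the at-most-\(N\) definition), we have \(|\Lcal_N|\ge|\Lcal_{Bn}|\ge n^n e^{-O(n)}\). With \(n\ge N/B-1\), this gives \(\log|\Lcal_N|\ge (N/B)\log N-O(N)\), which exceeds \(cN\log N\) for any \(c<1/B\) once \(N\) is large.

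The main point requiring care is the translation between the proposition's stick estimate and the \(Bn\) budget, including the requirement \(m\ge2\) and the integrality of \(B\). Everything else is elementary counting, and the substantive content — distinguishability of the links via the rigid mod-2 address graph — is already supplied by Proposition~\ref{prop:addressed-linking}.
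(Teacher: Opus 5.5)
Your proposal is correct and follows the paper's proof. You apply Proposition~\ref{prop:addressed-linking} with \(m=n\) and \(\sum_{i<j}a_{ij}\le n\), count the matrices by stars and bars, and pass to general \(N\) via \(n=\lfloor N/B\rfloor\), taking the upper bound from Remark~\ref{rem:link-upper}. Your estimate \(\binom{M'}{n}\ge (M'/n)^n=((n+1)/2)^n\) is just a Stirling-free variant of the paper's \(\prod_{k}(M+k)/k\ge M^n/n!\).
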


\begin{proof}
Take \(m=n\) in Proposition~\ref{prop:addressed-linking}, and consider all
nonnegative symmetric payload matrices with
\[
    \sum_{i<j}a_{ij}\le n.
\]
There are \(M=\binom n2\) independent entries, so the number of such matrices is
the stars-and-bars number
\[
    \binom{M+n}{n}.
\]
Each produces a distinct unmarked unordered link type, and each has total stick
number at most \(B n\) for a universal \(B\).  Finally,
\[
    \binom{M+n}{n}
    =
    \prod_{k=1}^n \frac{M+k}{k}
    \ge
    \frac{M^n}{n!}
    =
    n^n\exp(-O(n)),
\]
because \(M=\binom n2\).  The upper bound in the final display is
Remark~\ref{rem:link-upper}; the lower bound follows by replacing \(n\) by
\(\lfloor N/B\rfloor\).
\end{proof}

Thus ordinary unmarked unordered links already have the \(N^{\Theta(N)}\)
growth scale, with a completely elementary invariant: the absolute pairwise
linking matrix, together with a parity address frame that is itself part of the
unmarked link.

\section{Machine-checked core and proof boundary}

The accompanying Lean~4 development in \texttt{formal/} checks the algebraic
and finite-combinatorial core of the knot bounds.  On the upper side it verifies
the determinant and cross-product wall implications, the arithmetic identities
for the declared cyclic wall counts and slice-size formula \(3N-12\), the
lower-half range needed in the Barone--Basu sum, and the factorial-sensitive estimate
\[
 \sum_{j=0}^{m}4^j\binom{s}{j}
 \le (m+1)\left(\frac{12s}{m}\right)^m.
\]
It specializes this estimate to a linear base in \(N\) and assembles the final
upper inequality from explicitly named chamber-reduction and component-bound
hypotheses.

For the stronger lower bound, Lean verifies the finite Cauchy--Schwarz step
\(E_n^2\le n!g_n\) once the three RSK counting identities are supplied, the
floor formula
\[
 n=\left\lfloor\frac{2N+15}{3(q+9)}\right\rfloor,
\]
the exact stick-budget slack, and the algebra that propagates supplied count
inequalities already containing the \(n!\) endpoint-fiber loss and factor two
for forgetting orientation.

\paragraph{Exact verification status.}
Neither the chamber upper bound nor the long Garside lower-bound argument is
formalized end-to-end.  On the upper side, PL ambient-isotopy extension, the
topology of polygon chambers, the affine-slice chamber surjection, and the
Barone--Basu component theorem remain cited external mathematics.  The Lean
upper wrapper takes their required consequences as named hypotheses and checks
the determinant, binomial, and final inequality arithmetic after those inputs.
On the lower side, the RSK identities and Euler-number asymptotic, Garside
normal-form uniqueness and the endpoint-fiber count, the
Malyutin--Stupakov braid-to-knot injection, the Huh--Oh stick/arc theorem, and
the final choice \(q=\lfloor\log N\rfloor\) with its asymptotic analysis are
not formalized.  Lean checks the Cauchy--Schwarz consequence, floor and budget
inequalities, and the conditional finite transfer once the finite RSK,
Garside, and knot-theoretic consequences are supplied as hypotheses.  The
logarithmic choice of \(q\) and the asymptotic passage remain wholly outside
Lean.  Consequently, the headline asymptotic theorem is not presented as an
unconditional Lean theorem.

Concretely, the development does not define knot types, ambient isotopy,
semialgebraic chambers, braids, tableaux, or the RSK correspondence.  Its final
wrappers quantify over numerical component and knot counts and assume the
external geometric and knot-theoretic transfers relating those counts.

These theorem parameters are explicit proof boundaries, not project-specific
axioms.  For that reason they do not appear in \texttt{\#print axioms}: the
axiom report checks the foundations of the conditional Lean derivations, not
the external inputs themselves.  There are no \texttt{sorry}, \texttt{admit},
or project-specific \texttt{axiom} declarations; the selected declarations use
only the standard Lean/mathlib foundations printed by the root module.  Running
\texttt{lake build} in \texttt{formal/} builds the complete checked target.

\section{Fewnomial and determinant directions}

The crossing equations have two special features.  First, each wall is a
constant-support polynomial: the affine four-point determinant
\(\det(p_b-p_a,p_d-p_c,p_c-p_a)\) has at most \(24\) monomial terms.  Second,
the family of all such walls is highly correlated: these are determinant walls
coming from one point configuration, not unrelated sparse cubics.  This places
the problem between fewnomial real algebraic geometry and the combinatorics of
order types.

One possible route is determinant-combinatorial.  The signs of all affine
four-point determinants are the chirotope data of the labeled configuration in
\(\R^3\).  Classical work of Goodman--Pollack gives
\(\exp(O(N\log N))\)-type bounds for fixed-dimensional point configurations
\cite{GoodmanPollack}.  Alon's refinement gives the fixed-dimensional
asymptotic order for configurations: for fixed \(d\), the number of labeled
order types of \(N\) points in \(\R^d\) has the form
\[
    N^{d^2N+o(N)}
\]
\cite{AlonMatroids}.  The same work gives \(N^{\Theta(N)}\) upper and lower
bounds for fixed-rank real matroids.  Thus affine order types in \(\R^3\) live
at scale \(N^{9N+o(N)}\), while rank-four vector matroid data live at
\(N^{\Theta(N)}\).  Our crossing discriminant uses only a structured subset of
the four-point determinant data, together with interval inequalities in
projection charts.  A proof organized through order types may therefore give a
sharper version of Theorem~\ref{thm:upper}, and it would explain directly why
the relevant growth is of the form \(N^{O(N)}\).

Gros--Ramirez Alfonsin's strong geometry is the closest known reconstruction
result to this viewpoint \cite{GrosRamirez}.  It augments the ordinary affine
chirotope by a wedge chirotope encoding relative positions of spanned
hyperplanes, and proves that isomorphic strong geometries yield isotopic
generic polygonal knots.  This is complementary to our argument: their data
classify isotopy type but are not accompanied by an asymptotic enumeration,
whereas our smaller wall family is used to count connected realization
chambers without claiming that its sign vector alone classifies a knot.

There is an important distinction between counting determinant sign data and
counting chambers.  Order-type and oriented-matroid bounds count possible sign
patterns of minors.  The chamber count asks for connected components of
realization sets for such sign patterns, after intersecting with the stick-knot
discriminant complement.  Realization spaces of oriented matroids can be
topologically complicated by universality phenomena \cite{Mnev,RichterGebert};
hence matroid enumeration alone does not automatically give the chamber count.
It does, however, give a natural intermediate invariant between the raw
semialgebraic arrangement and the knot type map.

A second route is genuinely fewnomial.  Existing fewnomial theorems control
positive solutions of sparse systems and the topology of sparse hypersurfaces
\cite{KhovanskiiFewnomials,BihanSottileFewnomial,BihanRojasSottile,
BihanSottileBetti}.  The chamber problem here asks for connected components of
sign conditions for \(O(N^2)\) sparse determinant walls on the Stiefel manifold.
The useful theorem would keep the dependence on the common sparse support and
the determinant incidence structure, rather than replacing the equations by an
arbitrary family of bounded-degree polynomials.

\begin{question}[Sparse determinant arrangement problem]
Can one prove a sign-condition component bound for the sparse determinant-wall
arrangement on \(\St_3(H)\) that improves the exponent or leading constant in
the \(N^{O(N)}\) bound of Theorem~\ref{thm:upper}?
\end{question}

There is also a lower-bound version of the same problem.  Fixing a regular
planar shadow, the over/under data are signs of height differences at its
crossings, hence are constrained by a finite system of linear inequalities in
the vertex heights.  This gives a concrete feasibility problem for crossing
assignments.  If one can construct shadows for which many feasible assignments
have distinct knot invariants, then the fewnomial/determinant chamber structure
would also become a knot production mechanism, not only an upper-bound
mechanism.

\section*{Acknowledgments}

The authors used OpenAI's GPT-5.6-sol as a research and coding assistant to
help strengthen the results, improve the exposition, and generate and audit
the Python and Lean~4 code accompanying this paper.  The authors reviewed the
resulting mathematics and code and take responsibility for the final content.


\begin{thebibliography}{99}

\bibitem{AdamsStick}
C. Adams, B. M. Brennan, D. L. Greilsheimer, and A. K. Woo,
``Stick numbers and composition of knots and links,''
\emph{Journal of Knot Theory and Its Ramifications} 6 (1997), 149--161.
\url{https://doi.org/10.1142/S0218216597000121}.

\bibitem{BochnakCosteRoy}
J. Bochnak, M. Coste, and M.-F. Roy,
\emph{Real Algebraic Geometry},
Ergebnisse der Mathematik und ihrer Grenzgebiete, vol. 36,
Springer, 1998.

\bibitem{AlonMatroids}
N. Alon,
``The number of polytopes, configurations and real matroids,''
\emph{Mathematika} 33 (1986), 62--71.
\url{https://web.math.princeton.edu/~nalon/PDFS/Publications2/The%20number%20of%20polytopes%2C%20configurations%2C%20and%20real%20matroids.pdf}.

\bibitem{BaroneBasu}
S. Barone and S. Basu,
``Refined bounds on the number of connected components of sign conditions on a variety,''
\emph{Discrete \& Computational Geometry} 47 (2012), 577--597.
\url{https://arxiv.org/abs/1104.0636}.

\bibitem{BihanRojasSottile}
F. Bihan, J. M. Rojas, and F. Sottile,
``On the sharpness of fewnomial bounds and the number of components of a fewnomial hypersurface,''
\url{https://arxiv.org/abs/math/0701667}.

\bibitem{BihanSottileFewnomial}
F. Bihan and F. Sottile,
``New fewnomial upper bounds from Gale dual polynomial systems,''
\emph{Moscow Mathematical Journal} 7 (2007), 387--407.
\url{https://arxiv.org/abs/math/0609544}.

\bibitem{BihanSottileBetti}
F. Bihan and F. Sottile,
``Betti number bounds for fewnomial hypersurfaces via stratified Morse theory,''
\emph{Proceedings of the American Mathematical Society} 137 (2009), 2825--2833.
\url{https://arxiv.org/abs/0801.2554}.

\bibitem{Calvo}
J. A. Calvo,
``Geometric knot spaces and polygonal isotopy,''
\emph{Journal of Knot Theory and Its Ramifications} 10 (2001), 245--267.
\url{https://arxiv.org/abs/math/9904037}.

\bibitem{CantarellaEtAl2025}
J. Cantarella, A. Rechnitzer, H. Schumacher, and C. Shonkwiler,
New upper bounds for stick numbers,
preprint, arXiv:2508.18263, 2025.
\url{https://arxiv.org/abs/2508.18263}.

\bibitem{EdwardsKirby}
R. D. Edwards and R. C. Kirby,
``Deformations of spaces of imbeddings,''
\emph{Annals of Mathematics} 93 (1971), 63--88.

\bibitem{ElrifaiMorton}
E. A. Elrifai and H. R. Morton,
``Algorithms for positive braids,''
\emph{Quarterly Journal of Mathematics} 45 (1994), 479--497.
\url{https://doi.org/10.1093/qmath/45.4.479}.

\bibitem{FreedmanHeWang}
M. H. Freedman, Z.-X. He, and Z. Wang,
\emph{M\"obius energy of knots and unknots},
\emph{Annals of Mathematics} 139 (1994), 1--50.
\url{https://doi.org/10.2307/2946626}.

\bibitem{GoodmanPollack}
J. E. Goodman and R. Pollack,
``Upper bounds for configurations and polytopes in \(\R^d\),''
\emph{Discrete \& Computational Geometry} 1 (1986), 219--228.
\url{https://eudml.org/doc/186385}.

\bibitem{GrosRamirez}
B. Gros and J. L. Ramirez Alfonsin,
\emph{Strong geometry: knots},
preprint, arXiv:2504.00197, version 3, 2025.
\url{https://arxiv.org/abs/2504.00197}.

\bibitem{HuhOh}
Y. Huh and S. Oh,
``An upper bound on stick numbers of knots,''
\emph{Journal of Knot Theory and Its Ramifications} 20 (2011), 741--747.
\url{https://doi.org/10.1142/S0218216511008966}.

\bibitem{KhovanskiiFewnomials}
A. G. Khovanskii,
\emph{Fewnomials},
Translations of Mathematical Monographs, vol. 88,
American Mathematical Society, Providence, RI, 1991.

\bibitem{KnotInfoStick}
KnotInfo,
``Stick number.''
\url{https://knotinfo.org/descriptions/polygon_index.html}.

\bibitem{MalyutinStupakovArc}
A. Malyutin and M. Stupakov,
``On the number of knots with a given arc index,''
PDMI preprint 07/2022.
\url{https://www.pdmi.ras.ru/preprint/2022/22-07.html}; full text:
\url{http://ftp.pdmi.ras.ru/pub/publicat/preprint/2022/07-22.pdf.gz}.

\bibitem{MillettPhysical}
K. C. Millett,
``Physical knot theory: an introduction to the study of the influence of
knotting on the spatial characteristics of polymers,''
selected lectures presented at the Abdus Salam International Centre for
Theoretical Physics, 2009; manuscript dated 2010.
\url{https://web.math.ucsb.edu/~millett/Preprints/MillettTriesteLectures.pdf}.

\bibitem{Mnev}
N. E. Mn\"ev,
``The universality theorems on the classification problem of configuration
varieties and convex polytopes varieties,''
in \emph{Topology and Geometry: Rohlin Seminar},
Lecture Notes in Mathematics, vol. 1346, Springer, 1988, 527--543.

\bibitem{RandellSimonTokle}
R. Randell, J. Simon, and J. Tokle,
M\"obius transformations of polygons and partitions of \(3\)-space,
\emph{Journal of Knot Theory and Its Ramifications} 17 (2008), 1401--1413.
\url{https://doi.org/10.1142/S0218216508006671}.

\bibitem{PollackRoy}
R. Pollack and M.-F. Roy,
``On the number of cells defined by a set of polynomials,''
\emph{Comptes Rendus de l'Acad\'emie des Sciences, S\'erie I} 316 (1993),
573--577.
\url{https://mariefrancoiseroy.pages.math.cnrs.fr/MFRoymathpublications.html}.

\bibitem{RichterGebert}
J. Richter-Gebert,
\emph{Realization Spaces of Polytopes},
Lecture Notes in Mathematics, vol. 1643, Springer, 1996.
\url{https://link.springer.com/book/10.1007/BFb0093761}.

\bibitem{StanleyAlternating}
R. P. Stanley,
``A survey of alternating permutations,''
in \emph{Combinatorics and Graphs}, Contemporary Mathematics, vol. 531,
American Mathematical Society, 2010, 165--196.
\url{https://doi.org/10.1090/conm/531/10466}.

\end{thebibliography}
\end{document}